\numberwithin{equation}{section}
\numberwithin{table}{section}
\theoremstyle{plain}
\newtheorem{theorem}{Theorem}[section]
\newtheorem*{thm:A}{Theorem \ref{thm:A}}
\newtheorem*{thm:B}{Theorem \ref{thm:B}}
\newtheorem{lemma}{Lemma}[section]
\theoremstyle{remark}
\newcommand{\Zp}{{\mathbb{Z}_{p}}}
\newcommand{\Qp}{{\mathbb{Q}_{p}}}
\newcommand{\ZZ}{{\mathbb{Z}}}
\newcommand{\allone}{{\mathbf{1}}}
\newcommand{\Z}{{\mathbf{Z}}}
\DeclareMathOperator{\im}{Im}
\DeclareMathOperator{\rank}{rank}
\DeclareMathOperator{\Ker}{Ker}
\DeclareMathOperator{\End}{End}
\DeclareMathOperator{\Sym}{Sym}
\title[Critical group of $KG(n,2)$]{The critical group of the Kneser graph on $2$-subsets of an $n$-element set.}
\author[Ducey]{Joshua E. Ducey}
\author[Hill]{Ian Hill}
\author[Sin]{Peter Sin}
\address{Dept.\ of Mathematics and Statistics, James Madison University, Harrisonburg, VA 22807, USA}
\email{duceyje@jmu.edu}
\email{hillim@dukes.jmu.edu}
\address{Dept.\ of Mathematics, University of Florida, P.O. Box 118105, Gainesville, FL 32611, USA}
\email{sin@ufl.edu}
\thanks{This work was partially supported by a grant from the Simons Foundation (\#204181 to Peter Sin). }
\keywords{invariant factors, elementary divisors, Smith normal form, critical group, Jacobian group, sandpile group, Laplacian matrix, Kneser graph}
\subjclass[2010]{05C50}
\begin{document}
\begin{abstract}
In this paper we compute the critical group of the Kneser graph $KG(n,2)$.  This is equivalent to computing the Smith normal form of a Laplacian matrix of this graph.

\end{abstract}
\maketitle
\section{Introduction}
Given a finite graph $\Gamma$, it is of interest to compute linear algebraic invariants that arise from certain matrices attached to $\Gamma$.  Two of the most common matrices are the adjacency matrix $A$ and the Laplacian matrix $L$.  The spectrum, for example, of the matrices does not depend on the ordering of the vertices.  Another graph invariant which we will consider here is the cokernel of the matrix viewed as a map of free abelian groups.  A cyclic decomposition of the cokernel is described by the Smith normal form of the matrix.  

When the matrix is the adjacency $A$, the cokernel is called the \textit{Smith group} $S(\Gamma)$.  The torsion subgroup of the cokernel of $L$ is known as the \textit{critical group} $K(\Gamma)$.  The critical group is especially interesting; for a connected graph, the order of $K(\Gamma)$ counts the number of spanning trees of the graph.  The critical group is also sometimes called the \textit{sandpile group}, or \textit{Jacobian group} \cite{lorenzini}.

Let $\Gamma = KG(n,2)$, the Kneser graph with vertex set equal to the $2$-element subsets of an $n$-element set.  A pair of $2$-element subsets (we will abbreviate to ``$2$-subsets'') are adjacent when they are disjoint.  The complement $\Gamma^{\prime}$ of this graph is called the \textit{triangular graph} $T(n)$.  The Smith group $S(\Gamma^{\prime})$ was computed in \cite[SNF3]{bve} by using integral row and column operations on the adjacency $A^{\prime}$ to produce a diagonal form.  The critical group $K(\Gamma^{\prime})$ was described in \cite[Cor. 9.1]{crit-line}, where the authors study critical groups of line graphs (note that $\Gamma^{\prime}$ is the line graph of the complete graph on $n$ vertices).   In \cite{wilson} an integral basis was found for the inclusion matrices of $r$-subsets vs.\ $s$-subsets that puts them in diagonal form.  Observe that a pair of subsets are disjoint precisely when one is included in the complement of the other, thus the adjacency $A$ of $\Gamma$ is integrally equivalent to the $2$-subsets vs.\ $(n-2)$-subsets inclusion matrix.  In general, the Smith group of any Kneser graph can be deduced from the work \cite{wilson}. 

In this paper we will compute the critical group $K(\Gamma)$ of the Kneser graph on $2$-subsets of an $n$-element set.  It is worth drawing attention to the quite different approaches taken in calculating the three families of groups above.  In this work we will demonstrate yet another technique for calculating Smith/critical groups by applying some representation theory of the symmetric groups.

The paper is organized as follows.  In Section \ref{sec:prelims} we give formal definitions and describe a few important results and notions that will be used repeatedly.  In Section \ref{sec:elem} we compute the elementary divisor decomposition of the critical group $K(\Gamma)$ by considering each prime $p$ that divides the order of the group.  In Section \ref{sec:inv} we put this information together and obtain a concise description of the Smith normal form of $L$.  

\section{Preliminaries} \label{sec:prelims}

\subsection{Smith normal form}

Let $\Gamma$ be a finite graph.  Under some ordering of the vertex set $V(\Gamma)$, define the adjacency matrix $A = (a_{i,j})$ by
\[
a_{i,j} = \begin{cases}
1, & \mbox{if vertex $i$ is adjacent to vertex $j$}\\
 0, & \mbox{otherwise.}
\end{cases}
\]

Let $D$ denote the diagonal matrix of the same size at $A$, with the $(i,i)$ entry equal to the degree of vertex $i$.  Then the Laplacian matrix of $\Gamma$ is 
\[
L = D - A.
\]
We will view $L$ as defining a map of free abelian groups
\[
L \colon \ZZ^{V(\Gamma)} \to \ZZ^{V(\Gamma)}
\]
and we will use the same symbol for both the matrix and the map.

The critical group of $\Gamma$, denoted $K(\Gamma)$, is the torsion subgroup of $\ZZ^{V(\Gamma)} / \im(L)$.  For $\Gamma$ a connected graph, we have 
\[
\ZZ^{V(\Gamma)} / \im(L) \cong K(\Gamma) \oplus \ZZ.
\]
The critical group is also described by the Smith normal form of $L$, which we now review.

Let $M$ be an $m \times n$ integer matrix.  It is well known \cite[Ch. 12]{df} that there exist unimodular (i.e., unit determinant) matrices $U$ and $V$ so that $UMV = S$, where $S = (s_{i,j})$ is an $m \times n$ matrix satisfying:
\begin{enumerate}
\item $s_{i, i}$ divides $s_{i+1,i+1}$ for $i < \rank(M)$,
\item $s_{i,i} = 0$ for $i > \rank(M)$,
\item $s_{i,j} = 0$ for $i \neq j$.
\end{enumerate}
The entries of the matrix $S$ are unique up to multiplication by units, and $S$ is called the Smith normal form of $M$.  Viewing $M$ as defining a map:
\[
M \colon \ZZ^{n} \to \ZZ^{m},
\]
we have that 
\[
\ZZ^{m} / \im(M) \cong \ZZ / s_{1,1}\ZZ \oplus \ZZ / s_{2,2}\ZZ \oplus \cdots \oplus \ZZ / s_{r, r}\ZZ \oplus \ZZ^{m-r},
\]
where $r = \rank(M)$.  Thus the Smith normal form provides the \textit{invariant factor decomposition} of the finitely generated abelian group $\ZZ^{m} / \im(M)$.  Of course, any other diagonal form of $M$ achievable by unimodular matrices serves equally well to identify the group as a direct product of cyclic groups.  

A finite abelian group also has a decomposition into cyclic groups of prime-power orders.  This \textit{elementary divisor decomposition} is particularly accessible to us by using ideas which we now describe.  It is natural to work to identify the $p$-elementary divisors for each prime $p$.

When considering the $p$-elementary divisors of $M$, it is convenient to work over the $p$-adic integers, $\Zp$.  Any integer matrix can be viewed as a matrix with entries coming from $\Zp$.  The above theory of Smith normal form still holds, as $\Zp$ is a PID.  As each prime integer different from $p$ becomes a unit in $\Zp$, the entries $s_{i,i}$ in the Smith normal form of $M$ may be replaced with the largest power of $p$ that divides $s_{i,i}$.  In particular, the $p$-elementary divisor multiplicities of $M$ are the same whether we view $M$ as a matrix over $\ZZ$ or $\Zp$.  

Fix a prime $p$ and view $M$ as a matrix over $\Zp$.  We let $e_i$ denote the number of times $p^{i}$ occurs as an entry in the Smith normal form of $M$; that is, the multiplicity of $p^{i}$ as an elementary divisor of $M$.  (In the next section we will compute these numbers for the Laplacian matrix $L$ of the Kneser graph on $2$-subsets.)  We relate these numbers $e_{i}$ to some $\Zp$-submodules of the domain of $M \colon \Zp^{n} \to \Zp^{m}$ that we now define.

Let $M_{i} = M_{i}(M) = \{x \in \Zp^{n} \, \vert \, p^{i} \mbox{ divides } Mx\}$.  Then we have
\[
\Zp^{n} = M_{0} \supseteq M_{1} \supseteq M_{2} \supseteq \cdots \supseteq \Ker(M).
\]
For a $\Zp$-submodule $N \subseteq \Zp^{n}$ we set $\overline{N} = (N + p\Zp)/p\Zp$, and set $F = \ZZ/p\ZZ$.  Then by reduction modulo $p$ we have an induced map $\overline{M} \colon F^{n} \to F^{m}$, as well as an induced chain of $F$-vector spaces
\[
F^{n} = \overline{M_{0}} \supseteq \overline{M_{1}} \supseteq \cdots \supseteq \overline{\Ker(M)}.
\]
It is well-known (for example, \cite[Lemma 3.1]{moore}) that 
\begin{equation} \label{eqn-mdim}
\dim_{F} \overline{M_{i}} = \dim_{F} \overline{\Ker(M)} + \sum_{j \geq i} e_{j}.
\end{equation}

\subsection{Graph parameters}

The Kneser graph $KG(n,2)$ is the graph with vertex set equal to the $2$-subsets of an $n$-element set.  A pair of $2$-subsets are adjacent if and only if they are disjoint.  

One quickly checks that $KG(2,2)$ is the graph on one vertex, $KG(3,2)$ is the graph on three vertices with no edges, and $KG(4,2)$ is the disjoint union of three copies of the path on two vertices.  In all these cases the critical group is trivial.

Thus we will assume $n\geq 5$, and we will denote this graph by $\Gamma$ for the remainder of the paper.

This graph is strongly regular \cite[9.1.3]{bh} with parameters
\begin{align*}
v &= \binom{n}{2}\\
k &= \binom{n-2}{2}\\
\lambda &= \binom{n-4}{2}\\
\mu &= \binom{n-3}{2}.
\end{align*}
Here $v$ is the number of vertices of $\Gamma$ and the other parameters can be defined by the equation
\begin{equation} \label{eqn:srgA}
A^{2} = kI + \lambda A + \mu (J - A - I),
\end{equation}
where $A$ is the adjacency matrix, $I$ is the identity matrix, and $J$ is the matrix of all-ones.

Let $L$ denote the Laplacian matrix for $\Gamma$.  The eigenvalues of $L$ are:
\[
r \coloneqq \frac{n(n-3)}{2}
\]
occurring with multiplicity $f \coloneqq \binom{n}{1}-\binom{n}{0} = n-1$,
\[
s \coloneqq \frac{(n-4)(n-1)}{2}
\]
occurring with multiplicity $g \coloneqq \binom{n}{2}-\binom{n}{1} = \frac{n(n-3)}{2}$, and $0$ occurring with multiplicity $1$.

As a consequence of equation \ref{eqn:srgA} we have
\begin{equation} \label{eqn:srgL}
(L - rI)(L - sI) = \mu J.
\end{equation}

By the Matrix-Tree Theorem the critical group of $\Gamma$ has order
\begin{equation} \label{crit-order}
|K(\Gamma)| = \frac{\left[ \frac{n(n-3)}{2}\right]^{f} \left[ \frac{(n-4)(n-1)}{2}\right]^{g}}{\frac{n(n-1)}{2}} = \frac{n^{f-1}(n-1)^{g-1}(n-3)^{f}(n-4)^{g}}{2^{f+g-1}}.
\end{equation}

\subsection{Module lemmas}

Let $\Qp$ denote the field of fractions of $\Zp$. 

\begin{lemma} \label{lem:eigenspace}
Let $X$ be a free $\Zp$-module of finite rank and let $\phi \in \End_{\Zp}(X)$.  Let $\tilde{\phi}$ denote the induced endomorphism of $\Qp \otimes_{\Zp} X$.  Suppose that $\tilde{\phi}$ has an integer eigenvalue $u$, the $u$-eigenspace $V_{u}$ has dimension $b$, and $p^{a} \mid u$.  Then $\dim_{F} \overline{M}_{a} \geq b$.
\end{lemma}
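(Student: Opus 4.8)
The plan is to exhibit a $\Zp$-submodule of $M_a = M_a(M)$ whose image in $F^n$ has dimension at least $b$, where here $M$ plays the role of $\phi$ acting on $X = \Zp^n$. First I would take the $u$-eigenspace $V_u \subseteq \Qp \otimes_{\Zp} X$, which by hypothesis has $\Qp$-dimension $b$, and intersect it with $X$ to obtain $L_u \coloneqq V_u \cap X$. Since $X$ is a free $\Zp$-module of finite rank and $\Zp$ is a PID, $L_u$ is a free $\Zp$-submodule, and a standard argument (clearing denominators) shows $\rank_{\Zp} L_u = \dim_{\Qp} V_u = b$; moreover $L_u$ is a \emph{pure} (saturated) submodule of $X$, i.e.\ $X/L_u$ is torsion-free, hence free. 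Purity is the key point, because it guarantees that reduction mod $p$ is injective on $L_u$: if $x \in L_u$ with $x \in pX$, then $x = py$ for some $y \in X$, and $y \in L_u$ by purity, so $x \in pL_u$. Therefore $\dim_F \overline{L_u} = \rank_{\Zp} L_u = b$.

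Next I would check that $L_u \subseteq M_a$. For $x \in L_u$ we have $\phi(x) = ux$ (the eigenvalue equation holds in $\Qp\otimes_{\Zp} X$, but both sides lie in $X$), and since $p^a \mid u$ we get $p^a \mid \phi(x)$ in $X$, i.e.\ $x \in M_a$ by definition. Hence $\overline{L_u} \subseteq \overline{M_a}$ inside $F^n$, and therefore
\[
\dim_F \overline{M_a} \;\geq\; \dim_F \overline{L_u} \;=\; b,
\]
which is the desired inequality.

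The only genuinely delicate step is the purity/saturation claim and the resulting injectivity of reduction mod $p$ on $L_u$; everything else is bookkeeping. One must be slightly careful that $L_u = V_u \cap X$ really is saturated — this is immediate from the definition, since if $px \in L_u$ with $x \in X$ then $x \in X$ and $x \in \tfrac1p V_u = V_u$, so $x \in L_u$ — and that a saturated submodule of a finite-rank free module over a DVR is a free direct summand, which is the structure theorem for finitely generated modules over a PID. With these in hand, the bound $\dim_F \overline{L_u} = \rank L_u = b$ follows and the lemma is proved. (Note that this gives a lower bound only; there is no claim that $\overline{L_u}$ exhausts $\overline{M_a}$, which is exactly why the statement is an inequality rather than an equality.)
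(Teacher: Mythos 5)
Your proposal is correct and is essentially identical to the paper's proof: both take the pure submodule $X \cap V_u$ of rank $b$, observe it lies in $M_a$, and conclude $\dim_F \overline{M}_a \geq b$ from the fact that purity makes reduction mod $p$ preserve the rank. You simply supply the details of purity and of $\dim_F \overline{X \cap V_u} = b$ that the paper delegates to a citation.
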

\begin{proof}
The submodule $X \cap V_{u}$ of $X$ is pure \cite[pg. 84]{cr}, of rank $b$, and lies in $M_{a}$.  Thus $b = \dim_{F} \overline{X \cap V_{u}} \leq \dim_{F} \overline{M}_{a}$.  \end{proof}

The symmetric group $\Sym(n)$ has a natural action on the $2$-subsets of an $n$-element set.  This action makes $\Zp^{V(\Gamma)}$ into a $\Zp \Sym(n)$-permutation module, and the Laplacian a module endomorphism.  The subgroups $M_{i}$ become submodules.  Analogous statements hold when we extend scalars to $\Qp$ or reduce modulo $p$.  Thus we can make use of certain results from the representation theory of $\Sym(n)$.  We summarize what we will need below.

Let $\Gamma_{i}$ denote the graph with vertices the $i$-subsets of an $n$-element set, adjacent when disjoint.  It was shown in \cite[Ch. 17]{james} that the  $F \Sym(n)$-permutation module $F^{V(\Gamma_{i})}$ has a descending filtration with subquotients isomorphic to the \textit{Specht modules} $S^{j}$, for $0 \leq j \leq i$.  When $n>2i$, each Specht module $S^{j}$ has a unique maximal submodule with simple quotient denoted $D^{j}$.  Furthermore, the only other possible composition factors of $S^{j}$ are $D^{k}$, $k < j$, each occurring with multiplicity zero or one.  These composition factor multiplicities are given in the lemma below.  While the majority of our arguments will use only elementary linear algebra, this lemma will be useful to us in several places.  In \cite{james} the reader will find the notation $M^{(n-i, i)}$, $S^{(n-i,i)}$, $D^{(n-i,i)}$ instead of $F^{V(\Gamma_{i})}$, $S^{i}$, $D^{i}$, respectively.

\begin{lemma}[\cite{james}, Theorem 24.15] \label{lem:james}
Let $n \geq 5$ and $n>2i$.  Let $\left [ S^{i} : D^{j} \right ]$ denote the multiplicity of $D^{j}$ as a composition factor of $S^{i}$.  
\begin{enumerate}
\item $\left [ S^{1} : D^{0} \right ] = 1$ if $n \equiv 0 \pmod{p}$, and is $0$ otherwise.
\item $\left [ S^{2} : D^{1} \right ] = 1$ if $n \equiv 2 \pmod{p}$, and is $0$ otherwise.
\item Let $p>2$.  Then $\left [ S^{2} : D^{0} \right ] = 1$ if $n \equiv 1 \pmod{p}$, and is $0$ otherwise.
\item Let $p = 2$.  Then $\left [ S^{2} : D^{0} \right ] = 1$ if $n \equiv 1 \mbox{ or } 2 \pmod{4}$, and is $0$ otherwise.
\end{enumerate}
\end{lemma}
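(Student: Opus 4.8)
The statement is \cite[Theorem~24.15]{james}; were we to prove it ourselves, the plan would be as follows. Recall that each Specht module $S^{j}$ inherits a $\Sym(n)$-invariant symmetric bilinear form from the permutation module $F^{V(\Gamma_{j})}$ (in which the $j$-subsets form an orthonormal basis), and that $D^{j}$ is the quotient of $S^{j}$ by the radical of this form, which is also the unique maximal submodule of $S^{j}$. By the discussion preceding the lemma, the only composition factors of $S^{1}$ besides $D^{1}$, and of $S^{2}$ besides $D^{2}$, lie among $D^{0}$, resp.\ $\{D^{0},D^{1}\}$, each with multiplicity $0$ or $1$; these are precisely the composition factors of the radical of the form. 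So the whole problem reduces to computing, for $j=1,2$, the dimension of that radical over $F$.

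For $j=1$ this is immediate. Identifying $F^{V(\Gamma_{1})}$ with the natural module $F^{n}$, the Specht module $S^{1}$ is the augmentation submodule, with basis $e_{1}-e_{2},\dots,e_{1}-e_{n}$, in which the restricted form has Gram matrix $I+J$ of size $n-1$ and determinant $n$. Hence for $p\nmid n$ the form is nondegenerate and $S^{1}=D^{1}$, while for $p\mid n$ its radical is the line $F\allone$, a trivial submodule whose quotient is $D^{1}$. This proves (1), and also records that $[S^{1}:D^{1}]=1$ always and that $[S^{1}:D^{0}]$ equals $1$ if $p\mid n$ and $0$ otherwise, facts we reuse below.

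For $j=2$ the same strategy applies, with heavier bookkeeping. One realizes $S^{2}=S^{(n-2,2)}$ over $\Zp$ inside $F^{V(\Gamma_{2})}$ (for odd $p$, as the kernel of the down-map $\allone_{\{a,b\}}\mapsto e_{a}+e_{b}$) and computes, either from a polytabloid basis or by quoting the known two-row Gram determinant from \cite{james}, the rank modulo $p$ of the Gram matrix of the restricted form; since $\dim_{F}D^{1}\geq 3$ for $n\geq 5$, the single integer $\dim_{F}(\text{radical})$ already determines both $[S^{2}:D^{1}]$ and $[S^{2}:D^{0}]$. For odd $p$ one finds that the form degenerates exactly when $p$ divides $(n-1)(n-2)$, with $D^{1}$ entering the radical precisely when $p\mid n-2$ and $D^{0}$ precisely when $p\mid n-1$; this is (2) and (3). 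One may also avoid the Gram matrix entirely: the Specht filtration of $F^{V(\Gamma_{2})}$ gives $[F^{V(\Gamma_{2})}:D^{k}]=[S^{0}:D^{k}]+[S^{1}:D^{k}]+[S^{2}:D^{k}]$ for each $k$, and everything on the right is already known except $[S^{2}:D^{1}]$ and $[S^{2}:D^{0}]$, so it suffices to compute the two left-hand multiplicities, which follow from the $p$-ranks of the subset-inclusion matrices in \cite{wilson}.

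The main obstacle is the prime $p=2$, and this is exactly where the condition $n\equiv 2\pmod{4}$ in (4) arises rather than the $n\equiv 1\pmod{2}$ one might expect by specializing (3). Two features are responsible. First, the polytabloids for $(n-2,2)$ have four terms, so the invariant form is alternating modulo $2$ and the rank of its Gram matrix there is constrained to be even. Second, the natural maps between permutation modules on subsets acquire $2$-torsion in their cokernels, so over $\FF_{2}$ the module $S^{(n-2,2)}$ cannot be read off as a naive kernel, and the mod-$2$ reduction of its $\Zp$-lattice is strictly smaller than what the mod-$2$ linear algebra alone would suggest. Disentangling these two phenomena, while keeping track of $D^{0}$ and $D^{1}$ across the three layers $S^{0},S^{1},S^{2}$ of the Specht filtration so that nothing is double-counted, is where the work lies; everything else is elementary linear algebra.
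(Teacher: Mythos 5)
The paper offers no proof of this lemma at all: it is quoted verbatim from James (Theorem 24.15) and used as a black box, so there is no argument of the authors' own to compare yours against. Judged on its own terms, your plan is sound where it is actually carried out. Part (1) is complete and correct: the Gram matrix $I+J$ of the invariant form on the augmentation submodule has determinant $n$ and corank at most $1$ (being a rank-one perturbation of the identity), so the radical is exactly $F\allone$ when $p\mid n$ and zero otherwise. The reduction of (2)--(3) to computing the dimension of the radical of the form on $S^{2}$ is also legitimate, given the facts already quoted in the paper's preamble (the only composition factors of $S^{2}$ besides $D^{2}$ are $D^{0}$ and $D^{1}$, each with multiplicity at most one), and the dichotomy $p\mid n-2$ versus $p\mid n-1$ is consistent with the known two-row Gram determinant.

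The gap is clause (4), together with clause (2) at $p=2$ --- which is precisely the part of the lemma that is not a routine specialization of the odd-$p$ argument, since the answer depends on $n$ modulo $4$ rather than modulo $2$. Your final paragraph correctly identifies the two sources of difficulty (the form on $S^{(n-2,2)}$ is alternating in characteristic $2$, so its rank is forced to be even; and the mod-$2$ Specht module is a kernel intersection rather than the single naive kernel), but it stops there: no computation is performed that actually produces the condition $n\equiv 2\pmod 4$, and the closing admission that disentangling these phenomena ``is where the work lies'' leaves the hardest clause unproved. This is not a dispensable corner case: the paper invokes clause (4) in Case 3(d) of Section 3 to conclude that $F^{V(\Gamma)}$ has exactly two trivial composition factors when $4\mid n$. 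To close the gap you would either have to carry out the $2$-rank computation for the Gram matrix of $S^{(n-2,2)}$ honestly, distinguishing $n\equiv 0$ from $n\equiv 2$ modulo $4$, or simply cite James's Theorem 24.15 as the paper does.
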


We make one more observation.  Viewing the Laplacian $L$ as a map over $\Qp$, the eigenspaces are $\Qp \Sym(n)$-modules, and it is not hard to see that the $r$-eigenspace and $s$-eigenspace must be the irreducible Specht modules over $\Qp$; we denote them $\tilde{S^{1}}$ and $\tilde{S^{2}}$, respectively.  By a general principle \cite[Prop. 16.16]{cr} in modular representation theory, the $F \Sym(n)$-modules $\overline{\Zp^{V(\Gamma)} \cap \tilde{S^{i}}}$ and $S^{i}$ must share the same multiset of composition factors since they are both reductions modulo $p$ of $\Zp$-forms of $\tilde{S^{i}}$.  In particular $\dim_{F} S^{1} = f$ and $\dim_{F} S^{2} = g$, and we are able to use Lemma \ref{lem:james} to work out the dimensions of the simple modules $D^{i}$ in each case.


\section{The elementary divisors of $L$} \label{sec:elem}
Now we will compute the $p$-elementary divisors of $L$, for each prime $p$ that divides the order of the critical group of $\Gamma$.  If $p^{i} \mid m$, but $p^{i+1} \nmid m$, we will write $v_{p}(m) = i$.  We also remind the reader of our notation for the eigenvalues $r \coloneqq n(n-3)/2$ and $s \coloneqq (n-4)(n-1)/2$ of $L$, eigenspace dimensions $f \coloneqq n-1$ and $g \coloneqq n(n-3)/2$, and that $e_{i}$ denotes the multiplicity of $p^{i}$ as an elementary divisor of $L$. 

Let $p \mid |K(\Gamma)|$.
\vspace{.5cm}

\underline{Case 1}: $p > 3$.  Observe from equation \ref{crit-order} that if $p \mid |K(\Gamma)|$ and $p \neq 2, 3$ then $p$ must divide exactly one of $n, n-1, n-3, n-4$.

\begin{enumerate}
\item[a) ] $v_{p}(n) = a$.   We will need to establish that $\dim_{F} \overline{M}_{a} \geq f$.  This follows immediately from Lemma \ref{lem:eigenspace} since the eigenvalue $r$ satisfies $v_{p}(r) = a$.  Now by equations \ref{crit-order} and \ref{eqn-mdim} we observe:
\begin{align*}
a(f-1) &= v_p(|K(\Gamma)|) \\
&= \sum_{i\geq 0} ie_i \\
&\geq \sum_{i \geq a} ie_{i} \\
&\geq a\sum_{i \geq a} e_i \\
&= a(\dim_{F}\overline{M}_a -1) \\
&= a\dim_{F}\overline{M}_a -a \\
&\geq af - a
\end{align*}
and so we must in fact have equality throughout.  The fact that the top-most inequality in the chain above is an equality implies that $e_{i} = 0$ for $1 \leq i < a$.  Considering the next inequality above, equality now forces that $e_{i} = 0$ for $i > a$.  In the last inequality in the chain above, equality forces that $\dim_{F} \overline{M}_{a} = f$, and so $e_{a} = f-1$ by equation \ref{eqn-mdim}.  Finally, since $\sum_{i \geq 0} e_{i} = f+g$, we have shown that
\begin{align*}
e_{a} &= f-1,\\
e_0 &= g+1, \\
e_i &= 0, \mbox{ otherwise.} 
\end{align*}

\end{enumerate}

We pause the development to indicate to the reader how the proofs of the remaining cases will proceed.  In each case we will first establish lower bounds for the dimensions of various modules $\overline{M}_{i}$, and then a similar chain of inequalities as the one above will force nearly all of the multiplicities $e_{i}$ to equal zero.  In fact, this argument has been abstracted to a lemma in \cite{grassmann}, which we now state as it applies to our situation.
\begin{lemma}[\cite{grassmann}, Lemma 3.1] \label{lem:grassmann}
Set $d = v_{p}(|K(\Gamma)|)$.
Suppose  that we have an increasing sequence of indices 
$0<a_1<a_2<\cdots<a_h$ and a corresponding sequence of lower bounds
$b_1>b_2>\cdots>b_h$ satisfying the following conditions. 
\begin{enumerate}
\item$\dim_F\overline M_{a_j}\geq b_j$ for $j=1$,\ldots, $h$. 
\item $\sum_{j=1}^{h}(b_j-b_{j+1})a_j=d$, where we set $b_{h+1}=\dim_F\overline{\Ker(L)} = 1$. 
\end{enumerate}
Then the following hold.
\begin{enumerate}
\item $e_{a_j}=b_j-b_{j+1}$ for $j=1$,\ldots, $h$. 
\item $e_0=\dim_F\overline{M}_{0}-b_1 = f + g + 1 - b_{1}$.
\item $e_i=0$ for $i\notin\{0, a_1,\ldots, a_h\}$.
\end{enumerate}
\end{lemma}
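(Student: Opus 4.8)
The plan is to run, once and for all, the ``layer-cake'' version of the chain of inequalities that appeared in Case 1a), organized so that it can be quoted uniformly in the remaining cases. The only inputs are \eqref{eqn-mdim}, the identity $d=\sum_{i\ge 1} i\,e_i$ (the product of the nonzero elementary divisors of $L$ is $|K(\Gamma)|$, so $d=v_p(|K(\Gamma)|)=\sum_i i\,e_i$), and the value $\dim_F\overline{M_0}=\dim_F\overline{\Ker(L)}+\sum_{i\ge 0}e_i=f+g+1$.

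First I would rewrite $d$ as a sum over ``levels''. By \eqref{eqn-mdim} and $\dim_F\overline{\Ker(L)}=1$ we have $\sum_{i\ge k}e_i=\dim_F\overline{M_k}-1$ for every $k\ge 0$, so swapping the order of summation gives
\[
d \;=\; \sum_{i\ge 1} i\,e_i \;=\; \sum_{k\ge 1}\ \sum_{i\ge k} e_i \;=\; \sum_{k\ge 1}\bigl(\dim_F\overline{M_k}-1\bigr),
\]
a finite sum since $\dim_F\overline{M_k}=1$ once $\overline{M_k}=\overline{\Ker(L)}$. Next I would use that $k\mapsto\dim_F\overline{M_k}$ is non-increasing (as $M_k\supseteq M_{k+1}$) and is $\ge\dim_F\overline{\Ker(L)}=1$. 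Setting $a_0=0$ and partitioning $\{k\ge 1\}$ into the blocks $\{a_{j-1}<k\le a_j\}$ for $1\le j\le h$ together with the tail $\{k>a_h\}$, monotonicity and hypothesis~(1) give $\dim_F\overline{M_k}\ge b_j$ throughout the $j$-th block and $\dim_F\overline{M_k}\ge 1=b_{h+1}$ on the tail, whence
\[
d \;\ge\; \sum_{j=1}^{h}(a_j-a_{j-1})(b_j-1).
\]
A one-line Abel-summation rearrangement, using $a_0=0$ and $b_{h+1}=1$, shows the right-hand side equals $\sum_{j=1}^{h}(b_j-b_{j+1})\,a_j$, which is $d$ by hypothesis~(2); so the displayed inequality is an equality.

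Finally I would push this equality down to the individual terms. Each summand $\dim_F\overline{M_k}-1$ is $\ge 0$, so the tail must vanish termwise: $\dim_F\overline{M_k}=1$ for all $k>a_h$. On the $j$-th block the $a_j-a_{j-1}$ summands are each $\ge b_j-1$ and have total exactly $(a_j-a_{j-1})(b_j-1)$, forcing each to equal $b_j-1$; thus $\dim_F\overline{M_k}=b_j$ for $a_{j-1}<k\le a_j$. Since $e_i=\dim_F\overline{M_i}-\dim_F\overline{M_{i+1}}$ by \eqref{eqn-mdim}, reading off consecutive differences yields $e_0=(f+g+1)-b_1$, $e_{a_j}=b_j-b_{j+1}$ for $1\le j\le h$, and $e_i=0$ whenever $i$ lies strictly between two consecutive $a$'s or is $>a_h$ (the two relevant dimensions then coincide).

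I do not expect a real obstacle here: the whole content is the equality analysis of a chain of estimates, exactly as in Case 1a) but set up to be reusable. The delicate points are purely bookkeeping --- fixing the conventions $a_0=0$ and $b_{h+1}=\dim_F\overline{\Ker(L)}=1$, and keeping $\dim_F\overline{M_0}=f+g+1$ separate (it is \emph{not} among the values pinned to some $b_j$), so that $\sum_i e_i=f+g$ is respected and part~(2) comes out with the stated constant.
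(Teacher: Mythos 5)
Your proof is correct, and it is essentially the approach the paper itself uses: the paper cites the general lemma to \cite{grassmann} without proof, but its Case 1a) argument is exactly your $h=1$ chain of inequalities followed by equality analysis, and your layer-cake rewriting $d=\sum_{k\ge 1}(\dim_F\overline{M_k}-1)$ together with the Abel rearrangement is just that same double sum organized so the general $h$ case goes through uniformly. The bookkeeping (conventions $a_0=0$, $b_{h+1}=1$, $\dim_F\overline{M_0}=f+g+1$, and $e_i=\dim_F\overline{M_i}-\dim_F\overline{M_{i+1}}$) all checks out.
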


By applying Lemma \ref{lem:grassmann} with $h = 1$, $a_{1} = a$, $b_{1} = f$, $b_{2} = 1$ and $d = a(f-1)$, we recover 
\begin{align*}
e_{a} &= f-1,\\
e_0 &= g+1, \\
e_i &= 0, \mbox{ otherwise.} 
\end{align*}

\begin{enumerate}
\item[b) ] $v_{p}(n-1) = a$.  In this case we will need the bound $\dim_{F} \overline{M}_{a} \geq g$.  This follows from Lemma \ref{lem:eigenspace} since the eigenvalue $s$ satisfies $v_{p}(s) = g$.  Now we apply Lemma \ref{lem:grassmann} with $h = 1$, $a_{1} = a$, $b_{1} = g$, $b_{2} = 1$, and $d = a(g-1)$ to conclude
\begin{align*}
e_{a} &= g-1,\\
e_{0} &= f+1, \\
e_{i} &= 0, \mbox{ otherwise.}
\end{align*}

\item[c) ] $v_{p}(n-3) = a$.  In this case we will need the bound $\dim_{F} \overline{M}_{a} \geq f+1$.  We immediately get that $\dim_{F} \overline{M}_{a} \geq f$ by Lemma \ref{lem:eigenspace} applied to the eigenvalue $r$.  By Lemma \ref{lem:james} 
we see that the trivial module $D^{0}$ is not a composition factor of $S^{1}$ since $p \nmid n$.  But the trivial module spanned by the all-one vector (that is, $\overline{\Ker(L)}$) is contained in $\overline{M}_{a}$, and so we must have $\dim_{F} \overline{M}_{a} \geq f+1$.

Now we apply Lemma \ref{lem:grassmann} with $h = 1$, $a_{1} = a$, $b_{1} = f+1$, $b_{2} = 1$, and $d = af$ to conclude that 
\begin{align*}
e_{a} &= f,\\
e_{0} &= g,\\
e_{i} &= 0, \mbox{ otherwise.}
\end{align*}

\item[d) ] $v_{p}(n-4) = a$.  In this case we need the bound $\dim_{F} \overline{M}_{a} \geq g+1$.  By Lemma \ref{lem:eigenspace} applied to the eigenvalue $s$, we have that $\dim_{F}\overline{M}_{a} \geq g$.  
By Lemma \ref{lem:james} we see that the trivial module is not a composition factor of $S^{2}$ since $p \nmid n-1$.  However, since the trivial module spanned by the all-one vector lies in $\overline{M}_{a}$, we must have $\dim_{F} \overline{M}_{a} \geq g+1$.  Applying Lemma \ref{lem:grassmann} with $h=1$, $a_{1}=a$, $b_{1} = g+1$, $b_{2} = 1$, and $d = ag$ we conclude that 
\begin{align*}
e_{a} &= g,\\
e_{0} &= f,\\
e_{i} &= 0, \mbox{ otherwise.}
\end{align*}

\end{enumerate}

\underline{Case 2}: $p = 3$.  We have six distinct situations.

\begin{enumerate}
\item[a) ]  $v_{3}(n-1) = a$, with $a>1$.  Then we must have $v_{3}(n-4) = 1$.  We will need the bounds $\overline{M}_{1} \geq g+1$ and $\overline{M}_{a+1} \geq g$.  The second bound follows immediately from Lemma \ref{lem:eigenspace} applied to the eigenvalue $s$.  To establish the first bound, consider equation \ref{eqn:srgL}.  Modulo $3$, this equation reads
\begin{equation} \label{eqn:Lbar}
(\overline{L} - \overline{r}I)\overline{L} = 0.
\end{equation}
Since $r \neq 0 \pmod 3$ and $s = 0 \pmod 3$, the algebraic multiplicity of $0$ as an eigenvalue of $\overline{L}$ is $g+1$.  Equation \ref{eqn:Lbar} shows that $\overline{L}$ is diagonalizable.  Therefore, $\overline{M}_{1} = \Ker(\overline{L})$ has dimension $g+1$.  

By Lemma \ref{lem:grassmann} with $h=2$, $a_{1}=1$, $a_{2} = a+1$, $b_{1} = g+1$, $b_{2} = g$, $b_{3} = 1$, and $d = a(g-1)+g$, we conclude that
\begin{align*}
e_{1} &= 1,\\
e_{a+1} &= g-1,\\
e_{0} &= f,\\
e_{i} &= 0, \mbox{ otherwise.}
\end{align*}

\item[b) ]  $v_{3}(n-4) = a$, with $a>1$.  Then we must have $v_{3}(n-1) = 1$.  In this case we need to establish the bounds $\dim_{F}\overline{M}_{a} \geq g+1$ and $\dim_{F}\overline{M}_{a+1} \geq g$.  The second bound immediately follows from Lemma \ref{lem:eigenspace} applied to $s$.  To get the first bound, first consider equation \ref{eqn:srgL} which we rewrite as
\[
L(L - rI) = s(L-rI) + \mu J.
\]
Since both $s$ and $\mu$ are divisible by $3^{a}$, this shows that $\im(L - rI) \subseteq M_{a}$.  Therefore $\im(\overline{L} - \overline{r}I) \subseteq \overline{M}_{a}$.  Just as in the previous case, $\overline{L}$ is diagonalizable with $\dim_{F}\Ker(\overline{L} - \overline{r}I) = f$.  Thus $\dim_{F}\im(\overline{L} - \overline{r}I) = g+1$ is a lower bound for $\dim_{F} \overline{M}_{a}$.

By Lemma \ref{lem:grassmann} with $h=2$, $a_{1} = a$, $a_{2} = a+1$, $b_{1} = g+1$, $b_{2} = g$, $b_{3} = 1$, and $d = g - 1 + ag$ we have
\begin{align*}
e_{a} &= 1,\\
e_{a+1} &= g-1,\\
e_{0} &= f,\\
e_{i} &= 0, \mbox{ otherwise.}
\end{align*}

\item[c) ]  $v_{3}(n-1) = 1$ and $v_{3}(n-4) = 1$.  The bounds $\dim_{F} \overline{M}_{1} \geq g+1$ and $\dim_{F} \overline{M}_{2} \geq g$ are obtained in the same manner as part a) of this case.  Applying Lemma \ref{lem:grassmann} with $h=2$, $a_{1}=1$, $a_{2} = 2$, $b_{1} = g+1$, $b_{2} = g$, $b_{3} = 1$, and $d = 2g-1$ we obtain
\begin{align*}
e_{1} &= 1,\\
e_{2} &= g-1,\\
e_{0} &= f,\\
e_{i} &= 0, \mbox{ otherwise.}
\end{align*}

\item[d) ]  $v_{3}(n) = a$, with $a>1$.  Then we must have $v_{3}(n-3) = 1$.  The bounds $\dim_{F}\overline{M}_{1} \geq f+1$ and $\dim_{F} \overline{M}_{a+1} \geq f$ are obtained in the same manner as part a) of this case.  Applying Lemma \ref{lem:grassmann} with $h=2$, $a_{1} = 1$, $a_{2} = a+1$, $b_{1} = f+1$, $b_{2} = f$, $b_{3} = 1$, and $d = af - a + f$ we obtain
\begin{align*}
e_{1} &= 1,\\
e_{a+1} &= f-1,\\
e_{0} &= g,\\
e_{i} &= 0, \mbox{ otherwise.}
\end{align*}

\item[e) ]  $v_{3}(n-3) = a$, with $a>1$.  Then we must have $v_{3}(n) = 1$.  Here we need $\dim_{F}\overline{M}_{a} \geq f+1$ and $\dim_{F} \overline{M}_{a+1} \geq f$.  This is shown in the same manner as in part b) of this case. Applying Lemma \ref{lem:grassmann} with $h=2$, $a_{1} = a$, $a_{2} = a+1$, $b_{1} = f+1$, $b_{2} = f$, $b_{3} = 1$, and $d = f - 1 + af$ we obtain
\begin{align*}
e_{a} &= 1,\\
e_{a+1} &= f-1,\\
e_{0} &= g,\\
e_{i} &= 0, \mbox{ otherwise.}
\end{align*}

\item[f) ]   $v_{3}(n) = 1$ and $v_{3}(n-3) = 1$.  We need $\dim_{F}\overline{M}_{1} \geq f+1$ and $\dim_{F} \overline{M}_{2} \geq f$.  This is shown in the same manner as in part a) of this case. Applying Lemma \ref{lem:grassmann} with $h=2$, $a_{1} = 1$, $a_{2} = 2$, $b_{1} = f+1$, $b_{2} = f$, $b_{3} = 1$, and $d = 2f -1$ we obtain
\begin{align*}
e_{1} &= 1,\\
e_{2} &= f-1,\\
e_{0} &= g,\\
e_{i} &= 0, \mbox{ otherwise.}
\end{align*}

\end{enumerate}

\underline{Case 3}: $p = 2$.  

\begin{enumerate}
\item[a) ]  $n \equiv 3 \pmod 4$. Then $v_{2}(n-3) = a$, with $a>1$.  We must have $v_{2}(n-1) = 1$.  We will need the bound $\dim_{F}\overline{M}_{a-1} \geq f+1$.  By Lemma \ref{lem:eigenspace} applied to the eigenvalue $r$, we see that $\dim_{F}\overline{M}_{a-1} \geq f$.  We can improve this bound to the desired one by noting that the trivial module $D^{0}$ spanned by the all-one vector lies in $\overline{M}_{a-1}$, but $D^{0}$ is not a composition factor of $S^{1}$ by Lemma \ref{lem:james}.  

Applying Lemma \ref{lem:grassmann} with $h=1$, $a_{1}=a-1$, $b_{1} = f+1$, $b_{2} = 1$, and $d = af-f$ we conclude that 
\begin{align*}
e_{a-1} &= f,\\
e_{0} &= g,\\
e_{i} &= 0, \mbox{ otherwise.}
\end{align*}

\item[b) ]  $n \equiv 2 \pmod 4$. A quick glance at equation \ref{crit-order} shows that $2$ does not divide the order of $K(\Gamma)$, so there is nothing to show.

\item[c) ]  $n \equiv 1 \pmod 4$. Then $v_{2}(n-1) = a$, with $a>1$.  We must have $v_{2}(n-3) = 1$.  We need the bound $\dim_{F}\overline{M}_{a-1} \geq g$.  This is immediate from Lemma \ref{lem:eigenspace} applied to the eigenvalue $s$.  

Applying Lemma \ref{lem:grassmann} with $h=1$, $a_{1}=a-1$, $b_{1} = g$, $b_{2} = 1$, and $d = ag-a-g+1$ we conclude that 
\begin{align*}
e_{a-1} &= g-1,\\
e_{0} &= f+1,\\
e_{i} &= 0, \mbox{ otherwise.}
\end{align*}

\item[d) ]  $n \equiv 0 \pmod 4$.  To aid with some computations to follow, it will be useful to view $F^{V(\Gamma)}$ as consisting of formal $F$-linear combinations of $2$-subsets.   That is,
\[
F^{V(\Gamma)} = \left \{ \sum a_{\{i,j\}} \{i,j\} \, \vert \, a_{\{i,j\}} \in F, \{i,j\} \subseteq \{1, 2, \ldots, n\} \right \}.
\]
In this formalism the all-one vector $\allone$ can be written $\sum_{1 \leq i < j \leq n} \{i, j\}$.
\begin{enumerate}
\item[i) ] $v_{2}(n) = a$ with $a >2$.  Then we must have $v_{2}(n-4) = 2$.   The bounds we need this time are $\dim_{F}\overline{M}_{1} \geq g+1$ and $\dim_{F}\overline{M}_{a} \geq f$.  By Lemma \ref{lem:eigenspace} applied to eigenvalue $s$ we get $\dim_{F}\overline{M}_{1} \geq g$.  Since $S^{2}$ does not have the trivial module $D^{0}$ as a composition factor by Lemma \ref{lem:james} we can sharpen this bound to $\dim_{F}\overline{M}_{1} \geq g+1$ since $\overline{M}_{1}$ contains the trivial module spanned by $\allone$.  

Now we argue that $\dim_{F}\overline{M}_{a} \geq f$.  Consider the nondegenerate symmetric $\Sym(n)$-invariant bilinear form on $F^{V(\Gamma)}$ having $V(\Gamma)$ as orthonormal basis.  Then $\allone^{\perp}$ consists of the linear combinations of $2$-subsets with coefficients summing to zero.  Of course the map defined by the all-ones matrix $J$ is the zero transformation on such a vector.  Now rewrite equation \ref{eqn:srgL} as
\begin{equation}
L(L-(r+s)I) = \mu J - rsI.
\end{equation}
Since $v_{2}(rs) = a$, it follows from this equation that 
\begin{equation} \label{eqn:perp-bound}
\overline{L}(\allone^{\perp}) \subseteq \overline{M}_{a}.
\end{equation}
  From Lemma \ref{lem:james} it follows that $F^{V(\Gamma)}$ has exactly two trivial composition factors.  Since $\allone \subseteq \Ker(\overline{L})$, the image of $\overline{L}$ can have at most one trivial composition factor.  Direct computation shows that $\overline{L}\left ( \sum_{k \in \{2, \ldots , n\}} \{1, k\} \right ) = \allone$ and that $\overline{L}( \{1, 2\})$ is not in the span of $\allone$.  Thus
\begin{align*}
\dim_{F}\im(\overline{L}) &\geq 1 + \min\left ( \dim_{F}D^{1}, \dim_{F}D^{2} \right )\\
 &= 1 + (f-1)\\ 
 &= f.
\end{align*}

Therefore $\dim_{F} \overline{L}(\allone^{\perp}) \geq f-1$ since $\allone^{\perp}$ is codimension one in $F^{V(\Gamma)}$, and so $\overline{L}(\allone^{\perp})$ must have a nontrivial composition factor.  By equation \ref{eqn:perp-bound} $\overline{M}_{a}$ has a nontrivial composition factor.  Since $\allone \subseteq \overline{M}_{a}$, we have shown
\begin{align*}
\dim_{F}\overline{M}_{a} &\geq 1 + \min\left ( \dim_{F}D^{1}, \dim_{F}D^{2} \right )\\
 &= 1 + (f-1)\\ 
 &= f.
\end{align*}

Now applying Lemma \ref{lem:grassmann} with $h=2$, $a_{1} = 1$, $a_{2} = a$, $b_{1} = g+1$, $b_{2} = f$, $b_{3} = 1$, and $d = af - a + g - f + 1$ we obtain
\begin{align*}
e_{1} &= g+1-f,\\
e_{a} &= f-1,\\
e_{0} &= f,\\
e_{i} &= 0, \mbox{ otherwise.}
\end{align*}

\item[ii) ] $v_{2}(n-4) = a$ with $a >2$.  Then we must have $v_{2}(n) = 2$.  The bounds we need in this case are $\dim_{F}\overline{M}_{a-1} \geq g+1$ and $\dim_{F} \overline{M}_{a} \geq f$.  The first bound holds by Lemma \ref{lem:eigenspace} applied to eigenvalue $s$ and noting that $[ S^{2} : D^{0}] = 0$ by Lemma \ref{lem:james}.  The second bound is obtained by an identical argument as that in part i).

Applying Lemma \ref{lem:grassmann} with $h=2$, $a_{1} = a-1$, $a_{2} = a$, $b_{1} = g+1$, $b_{2} = f$, $b_{3} = 1$, and $d = f-1+ag-g$ we obtain
\begin{align*}
e_{a-1} &= g+1-f,\\
e_{a} &= f-1,\\
e_{0} &= f,\\
e_{i} &= 0, \mbox{ otherwise.}
\end{align*}

\item[iii) ] $v_{2}(n) = 2$ and $v_{2}(n-4) = 2$.  This case actually cannot occur, since $v_{2}(n) = 2$ forces $v_{2}(n-4) \geq 3$.

\end{enumerate}

\end{enumerate}

\section{The invariant factors of $L$} \label{sec:inv}
We now combine the information in the previous section to obtain a nice description of the Smith normal form of $L$.  We will write $\Z_{m}$ to denote the group $\ZZ / m\ZZ$.

\begin{theorem}
Let $n \geq 5$.  Then the critical group of the Kneser graph $\Gamma = KG(n,2)$ has the form
\[
K(\Gamma) \cong \begin{cases}
\Z_{n-4} \oplus \left ( \Z_{\frac{(n-4)(n-1)}{2}} \right )^{\frac{n(n-5)}{2}} \oplus \Z_{\frac{(n-4)(n-1)(n-3)}{4}} \oplus \left ( \Z_{\frac{(n-4)(n-1)(n-3)n}{4}} \right )^{n-2} & \mbox{if $n$ is odd,} \\
\Z_{\frac{n-4}{2}} \oplus \left ( \Z_{\frac{(n-4)(n-1)}{2}} \right )^{\frac{n(n-5)}{2}} \oplus \Z_{\frac{(n-4)(n-1)(n-3)}{2}} \oplus \left ( \Z_{\frac{(n-4)(n-1)(n-3)n}{4}} \right )^{n-2} & \mbox{if $n$ is even.}
\end{cases}
\]
\end{theorem}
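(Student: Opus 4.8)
The plan is to read off the invariant factor decomposition of $L$ from the prime-by-prime elementary divisor data assembled in Section \ref{sec:elem}. Write the nonzero invariant factors as $s_1 \mid s_2 \mid \cdots \mid s_{f+g}$, where $f+g = \binom{n}{2}-1 = \rank(L)$ since $\Gamma$ is connected. For a fixed prime $p$, the multiset of valuations $v_p(s_1) \le v_p(s_2) \le \cdots \le v_p(s_{f+g})$ is exactly the multiset in which the value $i$ occurs $e_i$ times; equivalently, the number of $s_j$ divisible by $p^i$ equals $\sum_{k \ge i} e_k$. So the theorem is equivalent to verifying, for every prime $p$ dividing $|K(\Gamma)|$ and every $i \ge 1$, that the displayed list of factors contains exactly $\sum_{k \ge i} e_k$ entries divisible by $p^i$ (with the $e_i$ taken from Section \ref{sec:elem}), together with the global check that the product of the displayed factors equals the order given in equation \ref{crit-order}.

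First I would record the elementary arithmetic that decouples the primes. From $\gcd(n,n-1)=1$, $\gcd(n,n-3)\mid 3$, $\gcd(n,n-4)\mid 4$, $\gcd(n-1,n-3)\mid 2$, $\gcd(n-1,n-4)\mid 3$ and $\gcd(n-3,n-4)=1$ we see that a prime $p>3$ divides at most one of $n,n-1,n-3,n-4$; that $3$ divides members of at most one of the pairs $\{n,n-3\}$ and $\{n-1,n-4\}$, with one member of that pair having $3$-valuation exactly $1$; and that $2$ divides exactly one of the parity classes $\{n,n-4\}$, $\{n-1,n-3\}$, with one member having $2$-valuation exactly $1$ when $n$ is odd and exactly $2$ when $n\equiv 0\pmod 4$. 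Consequently the $p$-adic valuation of each of the quantities $n-4$, $\frac{(n-4)(n-1)}{2}$, $\frac{(n-4)(n-1)(n-3)}{4}$ (or $\frac{(n-4)(n-1)(n-3)}{2}$), $\frac{(n-4)(n-1)(n-3)n}{4}$ appearing in the statement is computed factor by factor, and the global check reduces to the local ones.

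Next I would observe that in every case of Section \ref{sec:elem} the data yields the same ``staircase'': reading the sorted list of nonzero invariant factors from the largest, there are $f-1=n-2$ copies of the top value $\frac{(n-4)(n-1)(n-3)n}{4}$, then one copy of $\frac{(n-4)(n-1)(n-3)}{4}$ or $\frac{(n-4)(n-1)(n-3)}{2}$ according to the parity of $n$, then $n(n-5)/2 = g-f-1$ copies of $\frac{(n-4)(n-1)}{2}$, then one copy of $n-4$ or $\frac{n-4}{2}$, and finally $f=n-1$ trivial factors. For $p>3$ (Case 1) only $e_0$ and one $e_a$ are nonzero, with $e_a$ equal to $f-1$, $g-1$, $f$, $g$ according as $p$ divides $n$, $n-1$, $n-3$, $n-4$; one checks that $n$ divides precisely the top $f-1$ displayed factors, $n-1$ the top $g-1$, $n-3$ the top $f$, and $n-4$ the top $g$, each with the relevant valuation $a$. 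For $p=3$ (Case 2) each of the six subcases has exactly two positive-index multiplicities, and a short count of the $3$-adic valuations of the four distinct nontrivial displayed values confirms the match. For $p=2$ (Case 3) the subcases 3a, 3c cover odd $n$, the subcases 3d(i), 3d(ii) cover $n\equiv 0\pmod 4$, and $n\equiv 2\pmod 4$ has trivial $2$-part, consistent with the even formula; matching the counts $\sum_{k\ge i}e_k$ while tracking the divisions by $2$ and $4$ yields exactly the odd/even dichotomy in the second and fourth entries of the list.

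The only step that requires care is the $p=2$ and $p=3$ bookkeeping: one must confirm that the local counts at these two primes reassemble into the single displayed list uniformly in $n$, in particular pinning down the divisions by $2$ and $4$ and, in the subcases carrying an elementary divisor of multiplicity one, the exact invariant factor on which it sits. I would close with the global consistency check: the product of the displayed factors is $n^{f-1}(n-1)^{g-1}(n-3)^{f}(n-4)^{g}/2^{f+g-1}$, which equals $|K(\Gamma)|$ by equation \ref{crit-order}, so no elementary divisor has been lost or double-counted.
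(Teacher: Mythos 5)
Your proposal is correct and follows essentially the same route as the paper: both assemble the invariant factors of $L$ from the per-prime elementary divisor multiplicities $e_i$ computed in Section \ref{sec:elem}, using the coprimality of $n$, $n-1$, $n-3$, $n-4$ away from $2$ and $3$ to decouple the primes (the paper builds the factors constructively by absorbing $Syl_3$ and $Syl_2$ into the prime-to-$6$ parts, while you verify the displayed list by matching the counts $\sum_{k\geq i} e_k$ against its $p$-adic valuations, which is the same computation). Your spot checks for $p>3$ are accurate, and the $p=2$ and $p=3$ bookkeeping you flag as the delicate step does indeed work out exactly as you describe.
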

\begin{proof}
As was observed earlier, a prime that is not $2$ or $3$ can divide at most one of $n, n-1, n-3, n-4$.  We fix the notation $\pi_{m} \coloneqq \frac{m}{2^{x}3^{y}}$, where $v_{2}(m)=x$ and $v_{3}(m)=y$.  Thus $\pi_{n}, \pi_{n-1}, \pi_{n-3}, \pi_{n-4}$ are pairwise coprime.  For a prime $p$ we let $Syl_{p}$ denote the Sylow-$p$ subgroup of $K(\Gamma)$.

It follows from Case $1$ of Section \ref{sec:elem} that 
\begin{align}
K(\Gamma) &\cong \left ( \Z_{\pi_{n}}\right )^{f-1} \oplus \left ( \Z_{\pi_{n-3}}\right )^{f} \oplus \left ( \Z_{\pi_{n-1}}\right )^{g-1} \oplus \left ( \Z_{\pi_{n-4}}\right )^{g} \oplus Syl_{3} \oplus Syl_{2} \nonumber \\
&\cong \Z_{\pi_{n-4}} \oplus \left ( \Z_{\pi_{(n-4)(n-1)}}\right )^{g-1-f} \oplus \Z_{\pi_{(n-4)(n-1)(n-3)}} \oplus \left ( \Z_{\pi_{(n-4)(n-1)(n-3)n}}\right )^{f-1} \oplus Syl_{3} \oplus Syl_{2} \label{eqn:finalp}.
\end{align}

Now we use the results of Case 2 of Section \ref{sec:elem} to deal with $Syl_{3}$.  We fix the notation $\tau_{m} \coloneqq \frac{m}{2^{x}}$, where $v_{2}(m)=x$.  From Case 2, part a) we have 
\begin{align*}
e_{1} &= 1,\\
e_{a+1} &= g-1,\\
e_{0} &= f,\\
e_{i} &= 0, \mbox{ otherwise,}
\end{align*}
where $a = v_{3}(n-1)$ and $v_{3}(n-4)=1$.  Thus equation \ref{eqn:finalp} becomes
\begin{align*}
K(\Gamma) &\cong \Z_{3\pi_{n-4}} \oplus \left ( \Z_{3^{a+1}\pi_{(n-4)(n-1)}}\right )^{g-1-f} \hspace{-.3cm}\oplus \Z_{3^{a+1}\pi_{(n-4)(n-1)(n-3)}} \oplus \left ( \Z_{3^{a+1}\pi_{(n-4)(n-1)(n-3)n}}\right )^{f-1} \oplus  Syl_{2}\\
&= \Z_{\tau_{n-4}} \oplus \left ( \Z_{\tau_{(n-4)(n-1)}}\right )^{g-1-f} \oplus \Z_{\tau_{(n-4)(n-1)(n-3)}} \oplus \left ( \Z_{\tau_{(n-4)(n-1)(n-3)n}}\right )^{f-1} \oplus  Syl_{2}.
\end{align*}
In the same way one can move through the remaining parts of Case 2 to see how $Syl_{3}$ absorbs into the prior cyclic factors. Interestingly, all cases work out to the same cyclic decomposition:
\begin{equation} \label{eqn:final3}
K(\Gamma) \cong \Z_{\tau_{n-4}} \oplus \left ( \Z_{\tau_{(n-4)(n-1)}}\right )^{g-1-f} \oplus  \Z_{\tau_{(n-4)(n-1)(n-3)}} \oplus \left ( \Z_{\tau_{(n-4)(n-1)(n-3)n}}\right )^{f-1} \oplus  Syl_{2}.
\end{equation}

Finally, we look at how equation \ref{eqn:final3} appears when considering the possibilities for $Syl_{2}$ in Case 3 of Section \ref{sec:elem}.  If $n \equiv 3 \pmod{4}$, then 
\begin{align*}
K(\Gamma) &\cong \Z_{\tau_{n-4}} \oplus \left ( \Z_{\tau_{(n-4)(n-1)}} \right )^{g-1-f} \oplus \Z_{2^{a-1}\tau_{(n-4)(n-1)(n-3)}} \oplus \left ( \Z_{2^{a-1}\tau_{(n-4)(n-1)(n-3)n}}\right )^{f-1}\\
&= \Z_{n-4} \oplus \left ( \Z_{\frac{(n-4)(n-1)}{2}} \right )^{g-1-f} \oplus \Z_{\frac{(n-4)(n-1)(n-3)}{4}} \oplus \left ( \Z_{\frac{(n-4)(n-1)(n-3)n}{4}} \right )^{f-1},
\end{align*}
since $v_{2}(n-3) = a$ and $v_{2}(n-1) = 1$.  If $n \equiv 1 \pmod{4}$, then
\begin{align*}
K(\Gamma) &\cong \Z_{\tau_{n-4}} \oplus \left ( \Z_{2^{a-1}\tau_{(n-4)(n-1)}} \right )^{g-1-f} \oplus \Z_{2^{a-1}\tau_{(n-4)(n-1)(n-3)}} \oplus \left ( \Z_{2^{a-1}\tau_{(n-4)(n-1)(n-3)n}}\right )^{f-1}\\
&= \Z_{n-4} \oplus \left ( \Z_{\frac{(n-4)(n-1)}{2}} \right )^{g-1-f} \oplus \Z_{\frac{(n-4)(n-1)(n-3)}{4}} \oplus \left ( \Z_{\frac{(n-4)(n-1)(n-3)n}{4}} \right )^{f-1},
\end{align*}
since $v_{2}(n-1) = a$ and $v_{2}(n-3) = 1$.  If $n \equiv 0 \pmod{4}$ with $v_{2}(n) = a (a>2)$ and $v_{2}(n-4) = 2$, we have
\begin{align*}
K(\Gamma) &\cong \Z_{2\tau_{n-4}} \oplus \left ( \Z_{2\tau_{(n-4)(n-1)}} \right )^{g-1-f} \oplus \Z_{2\tau_{(n-4)(n-1)(n-3)}} \oplus \left ( \Z_{2^{a}\tau_{(n-4)(n-1)(n-3)n}}\right )^{f-1}\\
&= \Z_{\frac{n-4}{2}} \oplus \left ( \Z_{\frac{(n-4)(n-1)}{2}} \right )^{g-1-f} \oplus \Z_{\frac{(n-4)(n-1)(n-3)}{2}} \oplus \left ( \Z_{\frac{(n-4)(n-1)(n-3)n}{4}} \right )^{f-1}.
\end{align*}
If $n \equiv 0 \pmod{4}$ with $v_{2}(n-4) = a (a>2)$ and $v_{2}(n) = 2$, we have
\begin{align*}
K(\Gamma) &\cong \Z_{2^{a-1}\tau_{n-4}} \oplus \left ( \Z_{2^{a-1}\tau_{(n-4)(n-1)}} \right )^{g-1-f} \oplus \Z_{2^{a-1}\tau_{(n-4)(n-1)(n-3)}} \oplus \left ( \Z_{2^{a}\tau_{(n-4)(n-1)(n-3)n}}\right )^{f-1}\\
&= \Z_{\frac{n-4}{2}} \oplus \left ( \Z_{\frac{(n-4)(n-1)}{2}} \right )^{g-1-f} \oplus \Z_{\frac{(n-4)(n-1)(n-3)}{2}} \oplus \left ( \Z_{\frac{(n-4)(n-1)(n-3)n}{4}} \right )^{f-1}.
\end{align*}
Thus we see that $K(\Gamma)$ depends only on the parity of $n$ and the theorem is proved.

\end{proof}
\bibliographystyle{amsplain}
\bibliography{2-setsbib}

\end{document}